\newtheorem{thm}{Theorem}[section]
\newtheorem{lem}[thm]{Lemma}
\newtheorem{prop}[thm]{Proposition}
\newtheorem{ex}[thm]{Example}
\newcommand{\N}{{\mathbb{N}}}
\newcommand{\R}{{\mathbb{R}}}
\newcommand{\Z}{{\mathbb{Z}}}
\newcommand{\cS}{{\mathcal S}}
\newcommand{\cX}{{\mathcal X}}
\def\vector#1{\mbox{\boldmath $#1$}}
\newcommand{\Span}{{\rm Span}}
\newcommand{\Hom}{{\rm Hom}}
\newcommand{\rank}{\mathop{\rm rank}}
\newcommand{\image}{\mathop{\rm im}}
\newcommand{\kernel}{\mathop{\rm ker}}
\newcommand{\positivereal}{\mathbb{R}_{\geq 0}}
\def\positivereal{\mathbb{R}_{\ge 0}}
\numberwithin{equation}{section}
\def\filename{\texttt{\jobname.tex}} 
\def\E{{\mathbb E}} 
\def\la{\lambda}
\newcommand{\wt}{{\rm wt}}
\newcommand{\CL}{{\rm CL}}
\newcommand\cref[1]{Corollary~\ref{#1}}
\begin{document}

\title{Tutte polynomials and random-cluster models in Bernoulli cell complexes}


\author{Yasuaki Hiraoka}
\address{WPI-AIMR, Tohoku University, Sendai, 980-8577, Japan.}
\curraddr{}
\email{hiraoka@wpi-aimr.tohoku.ac.jp}
\thanks{}

\author{Tomoyuki Shirai}
\address{Institute of Mathematics for Industry, Kyushu University,
744, Motooka, Nishi-ku, Fukuoka, 819-0395, Japan}
\curraddr{}
\email{shirai@imi.kyushu-u.ac.jp}
\thanks{}


\subjclass[2010]{60C05, 05C80, 05E45}

\date{}

\dedicatory{}

\begin{abstract}
 This paper studies Bernoulli cell complexes from the
 perspective of persistent homology, Tutte polynomials, and random-cluster
 models. Following the previous work \cite{hs}, we first show the asymptotic
 order of the 
 expected lifetime sum of the persistent homology for the Bernoulli cell
 complex process on the $\ell$-cubical lattice. Then, an explicit formula of the
 expected lifetime sum using the Tutte polynomial is derived. Furthermore, we
 study a higher dimensional generalization of the random-cluster model derived from 
 the Edwards-Sokal type coupling, and
 show some basic results such as the positive association and the relation to
 the Tutte polynomial.
\end{abstract}

\maketitle
{\small {\bf Keywords.} Bernoulli cell complex, persistent homology, Tutte polynomial, random-cluster model}


%
%
%
%
%
\section{Introduction}

Random graphs have been studied in many fields of science such as communication
systems, neural networks, infectious diseases and so on. As a mathematical
framework of random graphs, one of the most standard models is the
Erd\"os-R\'enyi random graphs \cite{er}. Given a complete graph $K_n=(V_n,E_n)$,
the Erd\"os-R\'enyi random graph $G(n,p)$
is defined as a subgraph of $K_n$ with the same
vertex set $V_n$
in such a way that each edge appears in probability $p$. Namely,
this random graph models the connection between each pair of individuals (e.g.,
humans, neurons, etc) independently of the others with the same randomness
parameter.

Recently, the concept of random topology has emerged for the study of higher
dimensional generalizations of random graphs, and is used for studying 
multi-individuals interactions (e.g., \cite{chad}). In this area, the randomness
is often added on the simplicial or cell complexes, and  classical results on
random graphs which can be expressed by 0- or 1-dimensional homology (components
or cycles, respectively) are now being generalized using higher
dimensional homology and also new phenomena have been found. (e.g.,
\cite{kahle,lm}). 

In the previous work \cite{hs}, the authors study the $\ell$-Linial-Meshulam
random process on the maximal simplicial complex with $n$ vertices by using
persistent homology \cite{elz}.
The Linial-Meshulam random process is a natural generalization of 
the Erd\"os-R\'enyi random graph process based on maximal (complete)
simplicial complexes. 
For the $\ell$-Linial-Meshulam process,
they obtain the asymptotic order of the expected
lifetime sum of the persistent homology as $n$ goes to infinity, which can be
regarded as a generalization of Frieze's theorem \cite{frieze}.

In the present paper, we introduce a wider class of random cell complexes
which includes the Bernoulli bond percolation on graphs as well as
the Linial-Meshulam random complex. 
Given a cell complex $X$,
the $\ell$-Bernoulli cell complex process
$\cX=\{X(t) : t\in[0,1]\}$ on $X$ is defined in such a way that we first assign
a uniform random variable $t_\sigma$ on $[0,1]$ for each
$\ell$-cell $\sigma$ independently
and construct a filtration by $X(t)=X^{\ell-1}\sqcup\{\sigma :
t_\sigma\leq t\}$, where $X^{\ell-1}$ is the $(\ell-1)$-skeleton of $X$.
Thus, by fixing $t$, we obtain a random complex $X(t)$ called the Bernoulli
cell complex, which is a higher dimensional generalization of the 
Erd\"os-R\'enyi random graph on $K_n$ and the Bernoulli bond percolation model
on graphs.

Following the previous work, in this paper, we study the Bernoulli
cellular models 
by using persistent homology and Tutte polynomials. First, as in the previous 
work, we derive the order of the expected lifetime sum on the $\ell$-cubical 
lattice, which is the most natural generalization of bond percolation on a
sublattice in $\Z^d$.
Then, we modify the result by Steele \cite{steele} into our
setting by introducing a generalized version of Tutte polynomial
and obtain an explicit formula of the lifetime sum for arbitrary cell
complexes.  
Furthermore, we also investigate a higher dimensional generalization of the
random-cluster model \cite{grimmett} for the Bernoulli cell complex.
The random-cluster model is known to be a variant of the Erd\"os-R\'enyi random
graph in which the edge probabilities are modified respecting the global 
topology. This model has a strong connection to the Ising and Potts models in
statistical mechanics. In this paper, we reconsider the Edwards-Sokal coupling 
\cite{es} from the viewpoint of cohomology,
and generalize it to connect between
the $\ell$-dimensional random-cluster model and the Potts model. 

\section{Persistent homology}
In this paper, $\R_{\geq 0}$ (resp. $\Z_{\geq 0}$) is the set of nonnegative reals (resp. integers).
Let $X$ be a cell complex. 
The set of $\ell$-cells and the $\ell$-skeleton of $X$ are denoted by $X_\ell=\{\sigma^\ell_i : i=1,\dots,n_\ell\}$ and $X^{\ell}$, respectively. For an $\ell$-cell $\sigma^\ell_i$, $\ell$ is called its dimension, and the dimension $\dim X$ of $X$ is given by the maximum dimension of cells in $X$. All cell complexes studied in this paper are assumed to be finite in the sense that $\dim X<\infty$ and $|X_\ell|<\infty$.
Homology and cohomology groups are considered in the cellular setting, 
and its coefficient ring is taken from a field $K$ with characteristic zero, unless specified otherwise.
We refer the reader to \cite{hatcher} for the details of cellular homology and cohomology theory, or refer to Appendix \ref{sec:cell_homology} for its brief exposition.

Let $\cX=\{X(t) :  t\in \positivereal\}$ be a 
right continuous filtration\footnote{In this paper, the term ``filtration'' is used to
mean an increasing sequence of cell complexes as usual in topology.}
of a cell complex $X$. 
Namely, $X(t)$ is a
subcomplex of $X$, $X(t)\subset X(t')$ for $t\leq t'$, and 
$X(t)=\bigcap_{t<t'}X(t')$.
We assume that there exists a saturation time $T$ such that $X(T)=X$.
For each cell $\sigma\in X$, let
$t_{\sigma}=\min\{t\in\positivereal :  \sigma\in X(t)\}$ denote the
birth time of $\sigma$.

Let $K[\R_{\geq 0}]$ be a monoid ring. The elements in $K[\positivereal]$ are expressed by linear combinations of (formal) monomials $az^t$, where $a\in K$, $t\in \positivereal$, and $z$ is an indeterminate. The product of two elements are given by the linear extension of $az^{t}\cdot bz^{s}=abz^{t+s}$.

For a filtration $\cX=\{X(t)\}_{t\in\R_{\geq 0}}$, the {\em persistent homology} of $\cX$ is defined as a graded module
\begin{align}\label{eq:ph}
H_\ell(\cX)=\bigoplus_{t\in\R_{\geq 0}}H_\ell(X(t))
\end{align}
over the monoid ring $K[\R_{\geq 0}]$ with the action 
\begin{align*}
z^a\cdot([c_t])_{t\in\R_{\geq 0}}=([c'_t])_{t\in\R_{\geq 0}},\quad
c'_t=\left\{\begin{array}{ll}
c_{t-a},&\quad t\geq a\\
0,&\quad {\rm otherwise}
\end{array}
\right..
\end{align*}

The persistent homology characterizes the persistence of topological features during the filtration $\cX$. 
In particular, since $\cX$ is defined on a finite cell complex with a saturation time $T$, the structure theorem of the persistent homology holds:
\begin{thm}[\cite{zc}]
There uniquely exist indices $p,q\in\Z_{\geq 0}$ and 
$(b_i,d_i)\in\positivereal^2$ for $i=1,\dots,p$ with $b_i<d_i$ and $b_i\in\positivereal$ for $i=p+1,\dots,p+q$ such that the following isomorphism holds:
\begin{equation}\label{eq:decomposition}
	H_\ell(\cX)\simeq\bigoplus_{i=1}^{p}
	\left((z^{b_i})\biggl/(z^{d_i})
	\right)\oplus\bigoplus_{i=p+1}^{p+q}(z^{b_i}),
\end{equation}
where $(z^a)$ expresses an ideal in $K[\positivereal]$ generated by the monomial $z^a$. 
When $p$ or $q$ is zero, the corresponding direct sum is ignored.
\end{thm}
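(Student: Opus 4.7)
The plan is to reduce the statement to the classical structure theorem for finitely generated graded modules over a PID. First, because $X$ is finite, the set of birth times $\{t_\sigma : \sigma\in X\}$ is a finite subset of $[0,T]$; between two consecutive birth times the complex $X(t)$ is constant by right-continuity of the filtration, and hence $H_\ell(X(t))$ is constant there as well. Thus $H_\ell(\cX)$ is determined by its values at the finitely many critical times $0\le s_1<s_2<\dots<s_N=T$ together with the maps induced by the inclusions $X(s_j)\hookrightarrow X(s_k)$.

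Next, I would rescale time via an order-preserving bijection that sends $s_k\mapsto k$ on the critical set. This turns the $\positivereal$-indexed persistence module into a $\positiveinteger$-indexed one, equivalently a graded module over the polynomial ring $K[z]$, which is a principal ideal domain. Since each $H_\ell(X(s_k))$ is a finite-dimensional $K$-vector space (as $X$ is finite), the resulting graded $K[z]$-module is finitely generated.

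I would then invoke the graded version of the structure theorem for finitely generated modules over the PID $K[z]$: every such module decomposes uniquely, up to reordering, as a direct sum of shifted cyclic modules of the form $(z^a)$, which produce the infinite bars $b_i$ for $i>p$, and $(z^a)/(z^b)$ with $a<b$, which produce the finite bars $(b_i,d_i)$ for $i\le p$. Transporting the grading back through the rescaling recovers the birth and death values in $\positivereal$ claimed by the theorem.

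The main obstacle is the graded structure theorem itself together with the stated uniqueness: one must verify that the classical Smith-normal-form/elementary-divisor reduction over $K[z]$ can be performed in a grade-preserving manner, so that not merely the ungraded isomorphism type but also the integers $p,q$ and the specific shifts are invariants. This is most cleanly established either by a direct grading-aware row/column reduction of the presentation matrix coming from the cellular boundary operators $\partial_\ell$ and $\partial_{\ell+1}$ assembled at the critical times, or by the quiver-representation viewpoint, in which the persistence module is a pointwise finite-dimensional representation of a finite totally ordered poset whose indecomposables are precisely the interval modules. Reversing the time rescaling at the end is harmless, since any order-preserving bijection on the critical set extends to one on all of $\positivereal$ and preserves the direct-sum decomposition of the associated graded module.
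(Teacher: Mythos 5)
Your proposal is correct and is essentially the standard Zomorodian--Carlsson argument: the paper gives no proof of its own and simply cites \cite{zc}, whose proof proceeds exactly as you describe, by discretizing at the finitely many critical values to pass from the non-Noetherian ring $K[\R_{\geq 0}]$ to the graded PID $K[z]$ and then applying the graded structure theorem. Your attention to the grade-preserving form of the elementary-divisor reduction (equivalently, the $A_n$-quiver decomposition into interval modules) is precisely the point that makes the uniqueness of $p$, $q$ and the shifts go through.
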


Here $b_i$ and $d_i$ are called the birth and death times,
respectively, and they measure the events of appearance and
disappearance of topological features in the filtration $\cX$.  
Namely, it expresses that a homology generator is born at $H_\ell(X(b_i))$, persists during $H_\ell(X(t))$ for $b_i\leq t< d_i$, and dies at $H_\ell(X(d_i))$. 
The reduced persistent homology $\tilde H_\ell(\cX)$ is defined by using the reduced homology $\tilde H_\ell(X(t))$ in (\ref{eq:ph}), which corresponds to remove the generator with maximum lifetime in $H_0(\cX)$. 

The lifetime $l_i$ of the birth-death pair $(b_i,d_i)$ is defined by $l_i=d_i-b_i$.
For $p+1\leq i \leq p+q$, we assign the death time
as the saturation time $d_i=T$. 
In this paper, we study the lifetime sum of the $\ell$-th reduced persistent homology $\tilde H_\ell(\cX)$ given by 
$L_\ell=\sum_{i=1}^{p+q} l_i$ for $\ell\geq 1$ and $L_0=\sum_{i=1}^{p+q-1} l_i$. Then, we can easily check the following
\begin{align}\label{eq:lifetime}
L_{\ell}=\int_{[0,T]}\tilde\beta_\ell(t)dt,
\end{align}
where $\tilde\beta_\ell(t)=\tilde\beta_\ell(X(t))=\rank \tilde H_\ell(X(t))$ is the $\ell$-th reduced betti number of $X(t)$.

\section{Bernoulli model for cell complexes}
\subsection{Lifetime sum and generalization of Frieze's theorem}
Let $X$ be a cell complex. 
For $F\subset X_{\ell}$, we set a subcomplex of X by $X_F=X^{\ell-1}\sqcup F$. 
For $p\in[0,1]$,
The \textit{$\ell$-Bernoulli cell complex on $X$} is defined
as a random cell complex whose law is given 
by the probability measure $P_p$ on $\Omega_\ell(X)=\{X_F : F\subset
X_\ell\}$ such that $P_p(X_F)=p^{|F|}(1-p)^{|X_\ell\setminus F|}$. 
We often identify $\Omega_\ell(X)$ with $\{0,1\}^{X_\ell}$ as usual,
and for an element $F\in\{0,1\}^{X_\ell}$, we use  the notation 
\begin{align*}
F(\sigma)=\left\{\begin{array}{cc}
1, &\quad \sigma\in F\\
0, &\quad \sigma\notin F\end{array}
\right..
\end{align*}
Note that, when $X$ is given by the complete graph and $\ell=1$, this model is nothing but the Erd{\"o}s-R{\'e}nyi random graph model \cite{er}.

We next introduce a random filtration of the Bernoulli cell complex model. Let $\{t_\sigma : \sigma\in X_\ell\}$ be i.i.d. random variables uniformly distributed on $[0,1]$. We regard $t_\sigma$ as the birth time of the $\ell$-cell $\sigma$. Let $\cX=\{X^\ell(t)\}_{0\leq t\leq 1}$ be an increasing stochastic process on cell complexes defined by
\begin{align*}
X^\ell(t)=X^{\ell-1}\sqcup\{\sigma\in X_\ell : t_\sigma\leq t\}.
\end{align*}
The process starts from the $(\ell-1)$-skeleton $X^{\ell-1}$ at time $0$ and ends up with the $\ell$-skeleton $X^\ell$ at time 1, i.e.,
\begin{align*}
X^{\ell-1}=X^\ell(0)\subset X^\ell(t)\subset X^\ell(1)=X^\ell.
\end{align*}
We call $\cX$ the \textit{$\ell$-Bernoulli cell complex process on $X$}.
We note that, by definition, $X^\ell(t)$ is equal in law to
the $\ell$-Bernoulli cell complex for each $t\in[0,1]$. 

In the paper \cite{hs}, the authors study the $\ell$-Linial-Meshulam process
($\ell\geq 1$), which is nothing but
the Bernoulli cell complex process on the maximal simplicial complex $\Delta_n$ with $n$-vertices, and show the asymptotic order of the
expected lifetime sum of the $(\ell-1)$-st reduced persistent homology.  
\begin{thm}[\cite{hs}]\label{thm:lifetime1}
Let $L_{\ell-1}$ be the lifetime sum of the $(\ell-1)$-st reduced persistent homology of the $\ell$-Linial-Meshulam process  {\rm ($\ell\geq 1$)} on $\Delta_n$. Then,
\begin{align*}
\E[L_{\ell-1}]=O(n^{\ell-1})
\end{align*}
as $n\rightarrow\infty$.
\end{thm}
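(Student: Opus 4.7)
The plan is to apply the integral formula~\eqref{eq:lifetime} and Fubini's theorem to reduce the computation of $\E[L_{\ell-1}]$ to a pointwise estimate:
\[
\E[L_{\ell-1}] = \int_0^1 \E[\tilde\beta_{\ell-1}(X^\ell(t))]\,dt.
\]
Because the full $(\ell-1)$-skeleton $\Delta_n^{\ell-1}$ is already present at $t=0$, no $(\ell-1)$-cycles are created during the filtration, and the Euler--Poincar\'e relation (the only non-vanishing reduced Betti numbers of $X^\ell(t)$ live in degrees $\ell-1$ and $\ell$) yields the structural identity
\[
\tilde\beta_{\ell-1}(X^\ell(t)) = \binom{n-1}{\ell} - N(t) + \tilde\beta_\ell(X^\ell(t)),
\]
where $N(t) := |X^\ell(t)_\ell|$. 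In particular $\tilde\beta_{\ell-1}(X^\ell(t)) \le \binom{n-1}{\ell} = O(n^\ell)$ deterministically.

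Integrated over $[0,1]$ this deterministic bound is too weak by a factor of $n$, so I would split the integration at a threshold $t_0 \asymp 1/n$. On $[0,t_0]$ the trivial bound already suffices:
\[
\int_0^{t_0} \E[\tilde\beta_{\ell-1}(X^\ell(t))]\,dt \le \binom{n-1}{\ell}\,t_0 = O(n^{\ell-1}).
\]
On $[t_0,1]$ I would use a first-moment cocycle estimate in the spirit of Meshulam--Wallach, establishing a tail bound of the form
\[
\E[\tilde\beta_{\ell-1}(X^\ell(t))] \le C_\ell \binom{n}{\ell}(1-t)^{n-\ell}.
\]
The right-hand side is (up to constants) the expected number of \emph{uncovered} $(\ell-1)$-faces of $X^\ell(t)$, namely $(\ell-1)$-simplices not contained in any $\ell$-simplex; such an uncovered face provides a cocycle of singleton support, and the content of the argument is that above threshold every nontrivial cohomology class can be represented in this localized way. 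Integrating,
\[
\int_{t_0}^1 C_\ell\binom{n}{\ell}(1-t)^{n-\ell}\,dt \le C_\ell\,\frac{\binom{n}{\ell}}{n-\ell+1} = O(n^{\ell-1}),
\]
and adding the two pieces gives $\E[L_{\ell-1}] = O(n^{\ell-1})$.

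The hard part will be the tail bound on $\E[\tilde\beta_{\ell-1}(X^\ell(t))]$ in the large-$t$ regime. The naive pointwise inequality ``$\tilde\beta_{\ell-1} \le \#(\text{uncovered }(\ell-1)\text{-faces})$'' is \emph{false} below the threshold --- already for graphs ($\ell=1$), a perfect matching has no isolated vertex yet contributes $\Theta(n)$ to $\tilde\beta_0$. One must therefore argue only in expectation, bounding the contribution of nontrivial cocycles of larger support via coboundary expansion properties of the complete $(\ell-1)$-skeleton $\Delta_n^{\ell-1}$ in the Gromov / Meshulam--Wallach sense, where cocycles of weight $w$ are shown to survive into $X^\ell(t)$ with probability exponentially small in $w$. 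Once that moment bound on localized cocycles is secured, the two-regime splitting and integration are routine.
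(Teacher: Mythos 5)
First, a point of comparison: this paper does not actually prove Theorem~\ref{thm:lifetime1} --- it is imported verbatim from \cite{hs} --- so the only in-paper argument you can be measured against is the analogous cubical-lattice result, Theorem~\ref{thm:lifetime_cl}. There the upper bound comes from (\ref{eq:lifetime}) together with the trivial estimate $\tilde\beta_{\ell-1}(t)\le|X_{\ell-1}|$, which suffices because for $\CL(n)$ the target order $O(n^{\ell+1})$ coincides with the order of $|X_{\ell-1}|$. You correctly recognize that for $\Delta_n$ this shortcut loses a factor of $n$ (the target is $O(n^{\ell-1})$ while the number of $(\ell-1)$-cells is $\binom{n}{\ell}=O(n^{\ell})$), and your outer scaffolding --- the integral formula, the Euler--Poincar\'e identity with $\rho(\Delta_n^\ell)=\binom{n-1}{\ell}$, the split at $t_0\asymp 1/n$, and the two integrations --- is sound and is essentially the architecture of the proof in \cite{hs}.

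The gap is that the entire content of the theorem sits in the one inequality you do not prove, namely $\E[\tilde\beta_{\ell-1}(X^\ell(t))]\le C_\ell\binom{n}{\ell}(1-t)^{n-\ell}$ for all $t\ge t_0$. This is not an off-the-shelf consequence of Meshulam--Wallach: their result is a vanishing statement at the cohomological connectivity threshold $p=\ell\log n/n$ over a fixed finite coefficient group, whereas you need a quantitative first-moment bound on the Betti number throughout the sparse regime $p=c/n$ with $c$ constant, where cohomology is very far from vanishing (indeed $\tilde\beta_{\ell-1}=\Theta(n^{\ell})$ deterministically whenever $c<\ell+1$, so the constant $C_\ell$ must absorb this, and one must also control $\tilde\beta_\ell$ through the Euler--Poincar\'e identity). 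What is really needed is any integrable majorant of the form $\E[\tilde\beta_{\ell-1}(X^\ell(c/n))]\le n^{\ell}f(c)$ with $\int_0^\infty f<\infty$ --- not necessarily $e^{-c}$ --- and producing one requires the cocycle-counting and coisoperimetric analysis in the style of Aronshtam--Linial and Linial--Peled, adapted from threshold statements to expectations; that adaptation is precisely the work carried out in \cite{hs}. As written, your argument reduces the theorem to an unproven lemma of comparable depth to the theorem itself: you have identified the right lemma, the right threshold scale, and the right machinery (and correctly flagged why the naive ``uncovered faces'' bound fails pointwise), but the decisive estimate is asserted rather than established.
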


One of the key steps to obtain this order is the following formula of the lifetime sum relating to the weight of  spanning acycles. We recall \cite{hs} that a subset $S\subset X_k$ is called a \textit{$k$-spanning acycle} of $X$ if $\tilde H_k(X_S;\Z)=0$ and $|\tilde H_{k-1}(X_S;\Z)|<\infty$, and denote the set of $k$-spanning acycles by $\cS^k$.  The weight of $S$ is given by $\wt(S)=\sum_{\sigma\in S}t_\sigma$.

\begin{thm}[\cite{hs}]\label{thm:lifetime2}
Let $\cX=\{X(t)\}_{t\in \R_{\geq 0}}$ be a filtration of a  cell complex $X$ satisfying 
\begin{align*}
\tilde\beta_{\ell-1}(X^\ell)=\tilde\beta_{\ell-2}(X^{\ell-1})=0. 
\end{align*}
Then, the lifetime sum of the $(\ell-1)$-st reduced persistent homology of $\cX$ is expressed as
\begin{align*}
L_{\ell-1}=\min_{S\in \cS^\ell}\wt (S) - \max_{S\in\cS^{\ell-1}}\wt(X_{\ell-1}\setminus S).
\end{align*}
\end{thm}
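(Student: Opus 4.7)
The plan is to integrate the identity $L_{\ell-1}=\int_{[0,T]}\tilde\beta_{\ell-1}(X(t))\,dt$ of \eqref{eq:lifetime} after tracking how $\tilde\beta_{\ell-1}(X(t))$ changes as each cell is attached. I would totally order the cells of $X$ by their birth times, and call a $k$-cell $\sigma$ \emph{positive} if $\partial\sigma$ is already a boundary in $X(t_\sigma^-)$ --- so $\sigma$ creates a new $k$-cycle upon attachment --- and \emph{negative} otherwise, in which case $\partial\sigma$ represents a nontrivial class of $\tilde H_{k-1}(X(t_\sigma^-))$ that $\sigma$ then kills. Write $P_k,N_k\subset X_k$ for the sets of positive and negative $k$-cells, so $X_k=P_k\sqcup N_k$.

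A short case check shows that among all cells of $X$, only the positive $(\ell-1)$-cells (each raising $\tilde\beta_{\ell-1}$ by one) and the negative $\ell$-cells (each lowering it by one) affect $\tilde\beta_{\ell-1}$; cells of any other dimension or sign touch only $\tilde\beta_k$ or $\tilde\beta_{k-1}$ for some $k\neq \ell-1,\ell$. Applying Fubini to the integral gives
\begin{align*}
L_{\ell-1}=\sum_{\sigma\in P_{\ell-1}}(T-t_\sigma)-\sum_{\sigma\in N_\ell}(T-t_\sigma).
\end{align*}
The hypothesis $\tilde\beta_{\ell-1}(X^\ell)=0$ forces $|P_{\ell-1}|=|N_\ell|$ (every born class dies by time $T$), so the $T$-terms cancel and $L_{\ell-1}=\wt(N_\ell)-\wt(P_{\ell-1})$.

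It remains to identify $\wt(N_\ell)$ and $\wt(P_{\ell-1})$ with the asserted extrema over spanning acycles, and for this I would invoke the matroid greedy theorem for the cellular cycle matroid on $X_\ell$ (ground set $X_\ell$; independence means no $\ell$-cycle lies in the span of the selection modulo $X^{\ell-1}$). The very construction of $N_\ell$ is the greedy rule --- scan cells in order of increasing $t_\sigma$ and retain $\sigma$ iff no $\ell$-cycle is thereby created --- so $N_\ell$ will be a minimum-weight basis; combined with $\tilde\beta_{\ell-1}(X^\ell)=0$, which guarantees that every $(\ell-1)$-class is killed by $N_\ell$, this upgrades to the identity $\wt(N_\ell)=\min_{S\in\cS^\ell}\wt(S)$. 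Running the same argument one dimension lower, using $\tilde\beta_{\ell-2}(X^{\ell-1})=0$, identifies $N_{\ell-1}$ as a minimum-weight $(\ell-1)$-spanning acycle, so that
\begin{align*}
\wt(P_{\ell-1})=\wt(X_{\ell-1})-\wt(N_{\ell-1})=\max_{S\in\cS^{\ell-1}}\wt(X_{\ell-1}\setminus S),
\end{align*}
and substitution yields the theorem. I expect the main obstacle to be this greedy-matroid identification --- verifying that the cells $N_k$ read off from the Betti-number analysis really coincide with a minimum-weight $k$-spanning acycle --- together with reconciling the integer-coefficient definition of spanning acycle used in the paper with the characteristic-zero field used for persistent homology. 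The latter will rest on the observation that $H_k(X_F;\Z)$ is torsion-free for any $F\subset X_k$ (being a subgroup of the free group $\ker\partial_k$), so that a subset is a $\Z$-spanning acycle iff it is one over any characteristic-zero field.
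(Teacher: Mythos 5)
The paper does not prove this theorem itself: it is quoted from \cite{hs}, with only the remark that the simplicial-complex proof there carries over verbatim to cell complexes. Your argument is essentially that proof: the positive/negative cell decomposition of $\tilde\beta_{\ell-1}(X(t))$, the integral identity \eqref{eq:lifetime} to get $L_{\ell-1}=\wt(N_\ell)-\wt(P_{\ell-1})$, and the matroid greedy theorem (for the linear matroid of $\partial_\ell$, resp.\ $\partial_{\ell-1}$, whose bases are exactly the spanning acycles under the stated Betti-number hypotheses) to identify $N_\ell$ and $N_{\ell-1}$ with minimum spanning acycles; the coefficient issue is also handled correctly, since independence of integer columns is field-independent in characteristic zero. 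The only step worth writing out is the one you flag: the persistence rule retains $\sigma$ when it is not spanned by \emph{all} previously added $\ell$-cells, while the greedy rule tests independence against the previously \emph{retained} ones — these agree because, inductively, the retained set spans the added set.
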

Here, we remark that the original formula is derived for simplicial complexes, 
and all the proofs to derive the above formula for cell complexes are similarly performed as in the original discussion. 
We also note from Theorem \ref{thm:lifetime2} that $L_{0}$ is the same
as the weight of the minimum spanning tree. Hence, Theorem
\ref{thm:lifetime1} can be regarded as a higher dimensional
generalization of Frieze's theorem \cite{frieze}, known as $\E[L_0] \to
\zeta(3)$ as the number of vertices tends to $\infty$,
where $\zeta(s)$ is Riemann's zeta function.

%
%
%
%
%
\subsection{Bernoulli cell complex on the $\ell$-cubical lattice}

Let $I$ be a closed interval in $\R$ of the form $I=[a,a+1]$ or $I=[a,a]$ for some $a\in\Z$. We call these intervals elementary intervals. Elementary intervals of the form $[a,a+1]$ (resp. $[a,a]$) are called nondegenerate (resp. degenerate). A cell 
\begin{align*}
Q=I_1\times \cdots\times I_\ell\subset \R^\ell
\end{align*}
consisting of elementary intervals $I_k$, $k=1,\dots,\ell$, is called an elementary cube in $\R^\ell$. The dimension $\dim Q$ of $Q$ is given by the number of nondegenerate intervals. For more details on cubical settings, we refer the reader to \cite{kmm}.

For $\vector{n}=(n_0,\dots,n_\ell)$ with $n_k\in \N$,
let $\widetilde{\CL}(\vector{n})$ be the cell complex consisting of all the
elementary cubes $Q$ in $[0,n_0]\times \cdots \times [0,n_\ell]\subset \R^{\ell+1}$.
We define the {\em $\ell$-cubical lattice} $\CL(\vector{n})$ in $\R^{\ell+1}$ as the
$\ell$-skeleton of $\widetilde{\CL}(\vector{n})$. 
For $\vector{n}=(n,\dots,n)$ with the same entry $n$,
we simply denote them by $\widetilde{\CL}(n)$ and $\CL(n)$, respectively. 

When $\ell=1$, the $1$-Bernoulli cell complex model on $\CL(n)$
in $\R^2$ is the Bernoulli bond percolation model on a sublattice in $\Z^2$. 
In this section, we prove the following theorem.
\begin{thm} \label{thm:lifetime_cl}
 Let $L_{\ell-1}$ be
 the expected lifetime sum of the $\ell$-Bernoulli cell complex process on the
 $\ell$-cubical lattice $\CL(n)$ in $\R^{\ell+1}$. Then,
\[
 \E[L_{\ell-1}] = O(n^{\ell+1})
\]
as $n\rightarrow\infty$.
\end{thm}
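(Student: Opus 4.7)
The plan is to bound the integrand pointwise in the integral representation~(\ref{eq:lifetime}) applied at degree $\ell-1$,
\begin{align*}
L_{\ell-1} = \int_0^1 \tilde{\beta}_{\ell-1}(X^\ell(t))\,dt,
\end{align*}
so that the task reduces to a uniform, deterministic upper bound on $\tilde{\beta}_{\ell-1}(X^\ell(t))$. In particular, no probabilistic input beyond the existence of the filtration is actually needed.

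The crucial observation is that $t\mapsto\tilde{\beta}_{\ell-1}(X^\ell(t))$ is non-increasing. For $\ell\geq 2$ this is immediate from the identity $\tilde{\beta}_{\ell-1}(X^\ell(t))=\dim Z_{\ell-1}-\dim B_{\ell-1}(X^\ell(t))$: the $(\ell-1)$-skeleton of $X^\ell(t)$ is $\CL(n)^{\ell-1}$ for every $t$, hence $Z_{\ell-1}$ does not depend on $t$, while $B_{\ell-1}(X^\ell(t))=\image(\partial_\ell\colon C_\ell(X^\ell(t))\to C_{\ell-1})$ only grows as more $\ell$-cells enter. The case $\ell=1$ is the elementary fact that adding edges to a graph cannot create new connected components. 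Consequently, deterministically in the random times $\{t_\sigma\}$,
\begin{align*}
L_{\ell-1}\;\le\;\tilde{\beta}_{\ell-1}(X^\ell(0))\;=\;\tilde{\beta}_{\ell-1}(\CL(n)^{\ell-1})\;\le\;|\CL(n)_{\ell-1}|,
\end{align*}
the last inequality being the trivial estimate $\dim\tilde H_{\ell-1}(Y)\le\dim C_{\ell-1}(Y)$.

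The final step is a direct cell count on $[0,n]^{\ell+1}$. An $(\ell-1)$-dimensional elementary cube is specified by a choice of $\ell-1$ of the $\ell+1$ coordinate directions to be nondegenerate (each with $n$ positions) together with the remaining $2$ degenerate coordinates (each with $n+1$ integer positions), giving
\begin{align*}
|\CL(n)_{\ell-1}|=\binom{\ell+1}{\ell-1}n^{\ell-1}(n+1)^2=O(n^{\ell+1}).
\end{align*}
Taking expectation in the pointwise inequality completes the proof. I do not anticipate a substantial obstacle: the argument is essentially an integral-plus-counting bound combined with monotonicity of Betti numbers under cell attachment. The resulting estimate may well be far from sharp, but tightness is not asserted by the theorem; obtaining a matching lower bound would require genuinely probabilistic arguments that are not needed here.
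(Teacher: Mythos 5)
Your upper-bound argument is essentially the paper's own: the paper likewise bounds $L_{\ell-1}=\int_0^1\tilde\beta_{\ell-1}(t)\,dt$ by the pointwise estimate $\tilde\beta_{\ell-1}(t)\le|X_{\ell-1}|=\binom{\ell+1}{2}n^{\ell+1}+O(n^\ell)$, and your cell count agrees with its formula (\ref{numberoffaces}); the monotonicity of $t\mapsto\tilde\beta_{\ell-1}(X^\ell(t))$ is correct but superfluous, since the crude bound $\tilde\beta_{\ell-1}(t)\le\dim C_{\ell-1}$ already holds for every $t$. The one substantive difference is that the paper's proof also establishes a matching lower bound $\E[L_{\ell-1}]\ge\frac{\ell^2}{2(\ell+1)}n^{\ell+1}+O(n^\ell)$ via Theorem \ref{thm:lifetime2} (minimum spanning acycle), Proposition \ref{prop:face_number_sa}, and order statistics — this is what supports the remark following the theorem that the order \emph{equals} that of the vertex count — but you are right that the literal $O(n^{\ell+1})$ assertion does not require it, so your proof is complete for the statement as written.
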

This theorem shows that the asymptotic order of
the expected lifetime sum is equal to that of
the number of vertices in $\CL(n)$. 

Before proving the theorem, we give some basic properties of
$\CL(\vector{n})$ and $\ell$-spanning acycles on it. 


\begin{lem} Let $\tilde X = \widetilde{\CL}(\vector{n})$. Then, for $k \in \{0,1,\dots,
 \ell+1\}$, 
\begin{equation}
|\tilde X_k| = \sum_{p=k}^{\ell+1} {p \choose k} S_p(\vector{n}), 
 \label{numberoffaces}
\end{equation}
 where $S_p(\vector{n})$ is the elementary symmetric polynomial
 in $\vector{n}$ of degree $p$. 
\end{lem}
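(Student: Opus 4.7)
The plan is a direct double-counting argument. First I would describe how a $k$-dimensional elementary cube in $\tilde X = \widetilde{\CL}(\vector{n})$ arises: it is a product $I_0 \times I_1 \times \cdots \times I_\ell$ where exactly $k$ of the factors are nondegenerate elementary intervals and the remaining $\ell+1-k$ are degenerate. For each coordinate $i$, the number of nondegenerate elementary intervals in $[0,n_i]$ is $n_i$ (one for each choice of left endpoint $a \in \{0,\ldots,n_i-1\}$), while the number of degenerate ones is $n_i+1$ (one for each $a \in \{0,\ldots,n_i\}$).

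Stratifying $k$-cubes by the set $I \subset \{0,1,\ldots,\ell\}$ of nondegenerate coordinates therefore gives
\begin{equation*}
|\tilde X_k| \;=\; \sum_{\substack{I \subset \{0,\ldots,\ell\} \\ |I|=k}} \Bigl(\prod_{i \in I} n_i\Bigr) \Bigl(\prod_{j \notin I} (n_j+1)\Bigr).
\end{equation*}
Next I would expand each factor $(n_j+1)$ by choosing, for each $j \notin I$, either the term $n_j$ or the term $1$. Collecting the indices $j \notin I$ that contribute $n_j$ into a set $J \subset \{0,\ldots,\ell\} \setminus I$, this becomes
\begin{equation*}
|\tilde X_k| \;=\; \sum_{\substack{I, J \text{ disjoint} \\ |I|=k}} \;\prod_{i \in I \cup J} n_i.
\end{equation*}

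The final step is a reindexing: set $P = I \cup J$ and let $p = |P|$. Since $I$ and $J$ are disjoint with $|I| = k$, we have $p \geq k$, and for a fixed $P$ of size $p$ the number of ways to split $P = I \sqcup J$ with $|I|=k$ is exactly $\binom{p}{k}$ (choose $I$, then $J := P \setminus I$). Grouping by $P$ yields
\begin{equation*}
|\tilde X_k| \;=\; \sum_{p=k}^{\ell+1} \binom{p}{k} \sum_{\substack{P \subset \{0,\ldots,\ell\} \\ |P|=p}} \prod_{i \in P} n_i \;=\; \sum_{p=k}^{\ell+1} \binom{p}{k}\, S_p(\vector{n}),
\end{equation*}
as claimed. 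There is no real obstacle here: the statement is essentially a combinatorial identity, and the main thing to get right is the bookkeeping of which coordinates are nondegenerate versus degenerate, together with the binomial reindexing of $(I,J) \mapsto P$.
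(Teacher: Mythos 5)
Your proof is correct and starts from exactly the same decomposition as the paper, namely counting $k$-cubes by the set $I$ of nondegenerate coordinates to get $|\tilde X_k|=\sum_{|I|=k}\bigl(\prod_{i\in I}n_i\bigr)\bigl(\prod_{j\notin I}(n_j+1)\bigr)$. The only difference is cosmetic: the paper finishes by packaging this into the generating function $\sum_k |\tilde X_k| z^k=\prod_i(n_i z+n_i+1)=\sum_p S_p(\vector{n})(1+z)^p$ and reading off the coefficient of $z^k$, whereas you perform the same expansion by hand via the reindexing $(I,J)\mapsto P=I\cup J$; both yield the binomial coefficient $\binom{p}{k}$ in the same way.
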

\begin{proof}
Let $\tilde{X}=\widetilde{\CL}(\vector{n})$. 
We can see that 
\begin{align*}
|\tilde{X}_k|=\sum_{\substack{I\subset[\ell]\\|I|=k}}\left(\prod_{i\in
 I}n_i\right)\left(\prod_{j\in [\ell]\setminus I}(n_j+1)\right) 
\end{align*}
 for $k=0,1,2,\dots, \ell+1$, where $[\ell]=\{0,\dots,\ell\}$. 
It is also easy to see that 
\begin{align*}
 G(z,\vector{n}):=\sum_{k=0}^{\ell+1}|\tilde{X}_k|z^k
 =\prod_{i=0}^{\ell}(n_iz+n_i+1). 
\end{align*}
By expanding the right-hand side, we have 
\begin{align*}
 G(z,\vector{n})
 = \sum_{p=0}^{\ell+1} S_{p}(\vector{n}) (1+z)^p
 = \sum_{k=0}^{\ell+1} z^k \sum_{p=k}^{\ell+1}
 {p \choose k} S_{p}(\vector{n}), 
\end{align*}
and this completes the proof.
\end{proof}
We remark that, since $X=\CL(\vector{n})$ is the $\ell$-skeleton of $\tilde X = \widetilde{\CL}(\vector{n})$, $|X_k|=|\tilde X_k|$ for $k\in\{0,\dots,\ell\}$.

\begin{prop}\label{prop:face_number_sa}
For $\vector{n}=(n_0,\dots,n_\ell)$, the number of $\ell$-cells in $\ell$-spanning acycles is 
$N(\vector{n})=\ell S_{\ell+1}(\vector{n})+S_\ell(\vector{n})$.
\end{prop}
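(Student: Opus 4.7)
The plan is to observe that the cardinality of any $\ell$-spanning acycle $S$ is forced by an Euler-characteristic identity, once one knows that $X_S$ is $K$-acyclic. All the needed vanishing follows from contractibility of $\widetilde X := \widetilde{\CL}(\vector{n}) \cong \prod_{i=0}^\ell [0,n_i]$, together with the observation that homology in dimension $i$ depends only on cells of dimensions $i-1$, $i$, $i+1$.

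First, I would note that since $\widetilde X$ is contractible, $\tilde H_i(\widetilde X) = 0$ for all $i \geq 0$. Because $X^{\ell-1} = \widetilde{X}^{\ell-1}$ and truncating cells of dimension $> \ell-1$ cannot alter $H_i$ for $i \leq \ell-2$, we immediately get $\tilde\beta_i(X^{\ell-1}) = 0$ for $i \leq \ell-2$. The same dimension-shift reasoning applied to the inclusion $X^{\ell-1} \subset X_S$ gives $\tilde\beta_i(X_S) = \tilde\beta_i(X^{\ell-1}) = 0$ for $i \leq \ell-2$. Combined with the defining conditions $\tilde\beta_\ell(X_S) = \tilde\beta_{\ell-1}(X_S) = 0$ of a spanning acycle, this shows $X_S$ is $K$-acyclic, so $\chi(X_S) = 1$.

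Second, I would expand the Euler characteristic in two ways. Directly,
\[
\chi(X_S) = \sum_{k=0}^{\ell-1}(-1)^k |X_k| + (-1)^\ell |S| = 1.
\]
On the other hand, contractibility of $\widetilde X$ gives
\[
\chi(\widetilde X) = \sum_{k=0}^{\ell+1}(-1)^k |\widetilde X_k| = 1,
\]
and since $|X_k| = |\widetilde X_k|$ for $k \leq \ell$, subtracting the two identities yields
\[
|S| = |X_\ell| - |\widetilde X_{\ell+1}|.
\]

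Finally, the preceding lemma gives $|X_\ell| = S_\ell(\vector{n}) + (\ell+1)S_{\ell+1}(\vector{n})$ and $|\widetilde X_{\ell+1}| = S_{\ell+1}(\vector{n})$, so $|S| = S_\ell(\vector{n}) + \ell S_{\ell+1}(\vector{n}) = N(\vector{n})$, as desired. The main conceptual step is the acyclicity of $X_S$; everything else is routine Euler-characteristic bookkeeping.
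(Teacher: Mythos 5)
Your proof is correct and follows essentially the same route as the paper: both arguments compare the Euler characteristic of $X_S$ with that of the contractible complex $\widetilde{\CL}(\vector{n})$ to force $|S| = |\widetilde X_\ell| - |\widetilde X_{\ell+1}|$, then plug in the cell counts from the preceding lemma. The only cosmetic difference is that the paper records the general relation $\tilde\beta_\ell(X_F)=\tilde\beta_{\ell-1}(X_F)+|F|-(|\widetilde X_\ell|-|\widetilde X_{\ell+1}|)$ for arbitrary $F$ (justifying the vanishing of lower Betti numbers via the wedge-of-spheres description of the $(\ell-1)$-skeleton rather than your dimension-shift argument) and then specializes to spanning acycles.
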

\begin{proof}
 Let $\tilde{X}=\widetilde{\CL}(\vector{n})$, $X=\CL(\vector{n})$
 and $F\subset X_\ell$.
 The Euler characteristics of $\tilde{X}$ and $X_F$
 are given by $\chi(\tilde{X}) = 1$
 and $\chi(X_F) = 1 + (-1)^{\ell-1} \tilde\beta_{\ell-1}(X_F) 
 + (-1)^{\ell} \tilde\beta_{\ell}(X_F)$, respectively, 
 since $\tilde{X}$ is contractible and
 the $(\ell-1)$-skeleton of $X_F$ is homotopy equivalent to the wedge sum of
 $(\ell-1)$-spheres.
 By applying the Euler-Poincar\'{e} formula
 to $\tilde{X}$ and $X_F$ and taking the difference,
 we easily see that 
 $\tilde\beta_\ell(X_F)$ and $\tilde\beta_{\ell-1}(X_F)$ are related as 
\begin{align}\label{eq:rho}
 \tilde\beta_\ell(X_F)=\tilde\beta_{\ell-1}(X_F)+|F|-
 (|\tilde{X}_{\ell}| - |\tilde{X}_{\ell+1}|). 
\end{align}
 Since $\tilde\beta_\ell(X_F)=\tilde\beta_{\ell-1}(X_F) = 0$ for $F \in \cS^\ell$, 
(\ref{numberoffaces}) and (\ref{eq:rho}) lead to
 $N(\vector{n}) = |F|
=  |\tilde{X}_{\ell}| - |\tilde{X}_{\ell+1}|
 = \ell S_{\ell+1}(\vector{n})+
 S_{\ell}(\vector{n})$ for $F\in\cS^\ell$.
%
\end{proof}

 \begin{proof}[Proof of Theorem~\ref{thm:lifetime_cl}]
  From (\ref{numberoffaces}) the inequality $\tilde\beta_{\ell-1}(t)\leq
  |X_{\ell-1}| = {\ell+1\choose 
  2}n^{\ell+1}+O(n^\ell)$ holds, which together with
  (\ref{eq:lifetime}) implies
  $\E[L_{\ell-1}] \le {\ell+1\choose 2}n^{\ell+1}+O(n^\ell)$. 
For lower bound, we note that the assumption in Theorem
\ref{thm:lifetime2} is satisfied on $\CL(n)$, and we have
$L_{\ell-1}=\min_{S\in \cS^\ell}\wt(S)$. Let $0\leq
t_{\sigma_1}<\dots<t_{\sigma_{m}}\leq 1$ be the reordering of the
  $\ell$-cells with respect to the birth times, where
  $m=|X_{\ell}|=(\ell+1)n^\ell(n+1)$ from (\ref{numberoffaces}). 
Hence, Theorem \ref{thm:lifetime2}, Proposition
\ref{prop:face_number_sa}, and the expectation of the ordered statistics
lead to 
\begin{align*}
\E[L_{\ell-1}]\geq \E[t_{\sigma_1}+\dots+t_{\sigma_{N(n)}}]=\sum_{k=1}^{N(n)}\frac{k}{m+1}=\frac{\ell^2}{2(\ell+1)}n^{\ell+1}+O(n^\ell).
\end{align*}
This concludes the proof. 
\end{proof}

\section{Tutte polynomial and expected lifetime sum}
This section derives an explicit formula for the expected lifetime sum in the
Bernoulli cell complex process using the Tutte polynomial. In this paper, we define
the \textit{$\ell$-Tutte polynomial} of a cell complex $X$ as
\begin{align}\label{eq:tutte}
T_\ell(X;x,y)=\sum_{F\subset X_\ell}(x-1)^{\tilde\beta_{\ell-1}(X_F)}(y-1)^{\tilde\beta_\ell(X_F)},
\end{align}
This definition is essentially the same as \cite{kr}, and its contraction-deletion reduction is also studied in \cite{bbc}. We also note that $T_\ell(X;1,1)=|\cS^\ell|$ counts the number of $\ell$-spanning acycles.

First of all, as used in (\ref{eq:rho}), we have
\begin{align*}
\tilde\beta_\ell(X_F)=\tilde\beta_{\ell-1}(X_F)+|F|-\rho(X),
\end{align*}
where $\rho(X)$ is independent of the choice of $F\subset X_\ell$ and expressed as
\begin{align*}
\rho(X)=(-1)^\ell\left(\sum_{k=0}^{\ell-2}(-1)^k\tilde\beta_k(X)-\sum_{k=0}^{\ell-1}(-1)^k|X_k|+1\right).
\end{align*}
Then, the $\ell$-Tutte polynomial can be represented as an
expectation with respect to the Bernoulli measure: 
\begin{align*}
T_\ell(X;x,y)&=\sum_{F\subset X_\ell}(x-1)^{\tilde\beta_{\ell-1}(X_F)}(y-1)^{\tilde\beta_\ell(X_F)}\\
&=\sum_{F\subset X_\ell}(x-1)^{\tilde\beta_{\ell-1}(X_F)}(y-1)^{\tilde\beta_{\ell-1}(X_F)+|F|-\rho(X)}\\
&=y^{|X_\ell|}(y-1)^{-\rho(X)}\sum_{F\subset X_\ell}\left(1-\frac{1}{y}\right)^{|F|}\left(\frac{1}{y}\right)^{|X_\ell\setminus F|}\{(x-1)(y-1)\}^{\tilde\beta_{\ell-1}(X_F)}\\
&=y^{|X_\ell|}(y-1)^{-\rho(X)}\E\left[\{(x-1)(y-1)\}^{\tilde\beta_{\ell-1}(X_F)}\right]. 
\end{align*}
Here the law of $X_F$ is given by the Bernoulli cell complex model with
probability $1-\frac{1}{y}$. 

For a Bernoulli cell complex process $\cX=\{X(t)\}_{0\leq t\leq 1}$ of $X$,
we consider the Laplace transform of $\beta_{\ell-1}(X(t))$ defined by 
\begin{equation}
\phi(\la,t)=\E\left[e^{\la\tilde\beta_{\ell-1}(X(t))}\right].
\label{eq:laplace1}
\end{equation}
Then, by setting $e^\la=(x-1)(y-1)$ and $t=1-\frac{1}{y}$, we immediately obtain from
(\ref{eq:tutte}) that
\begin{equation}
 \phi(\la,t) =y^{-|X_\ell|}(y-1)^{\rho(X)}T_\ell\left(X;1+\frac{1-t}{t}e^{\la},
\frac{1}{1-t}\right).
\label{eq:laplace2} 
\end{equation}
From these expressions (\ref{eq:laplace1}) and (\ref{eq:laplace2}),
by taking logarithmic derivative at $\la=0$, we have
\begin{align*}
\left.\frac{\partial}{\partial \la}\log
 \phi(\la,t)\right|_{\la=0}
 = \E[\tilde\beta_{\ell-1}(X_t)]
=\frac{1-t}{t}\cdot\frac{\partial_xT_\ell\left(X;\frac{1}{t},\frac{1}{1-t}\right)}{T_\ell\left(X;\frac{1}{t},\frac{1}{1-t}\right)}. 
\end{align*}
Thus, from (\ref{eq:lifetime}), this leads to following formula.
\begin{thm}
Let $L_{\ell-1}$ be the lifetime sum of the $(\ell-1)$-st reduced persistent
  homology of the $\ell$-Bernoulli complex process on $X$. Then,  
\begin{align}\label{eq:expect_lifetime_sum}
\E[L_{\ell-1}]=\int_0^1
 \frac{1-t}{t}\cdot\frac{\partial_xT_\ell\left(X;\frac{1}{t},\frac{1}{1-t}\right)}{T_\ell\left(X;\frac{1}{t},\frac{1}{1-t}\right)}dt. 
\end{align}
\end{thm}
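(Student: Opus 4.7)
The plan is to combine the deterministic identity for the lifetime sum from equation (\ref{eq:lifetime}) with the Laplace-transform identity already assembled in the preceding paragraphs. Since (\ref{eq:lifetime}) yields
\[
L_{\ell-1} = \int_0^1 \tilde\beta_{\ell-1}(X(t))\,dt
\]
and the integrand is bounded by $|X_{\ell-1}|$, Fubini gives
\[
\E[L_{\ell-1}] = \int_0^1 \E[\tilde\beta_{\ell-1}(X(t))]\,dt.
\]
Thus it suffices to evaluate $\E[\tilde\beta_{\ell-1}(X(t))]$ for each fixed $t\in[0,1]$ in terms of $T_\ell(X;\cdot,\cdot)$, using that $X(t)$ is distributed as the $\ell$-Bernoulli cell complex with parameter $p=t$.

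For this, I would recall the Euler-characteristic identity $\tilde\beta_\ell(X_F) = \tilde\beta_{\ell-1}(X_F) + |F| - \rho(X)$ displayed just before the theorem; plugging it into the definition (\ref{eq:tutte}) of $T_\ell(X;x,y)$ and pulling out $y^{|X_\ell|}(y-1)^{-\rho(X)}$ rewrites the Tutte polynomial as a Bernoulli expectation with parameter $p = 1 - 1/y$, namely
\[
T_\ell(X;x,y) = y^{|X_\ell|}(y-1)^{-\rho(X)}\,\E\!\left[\{(x-1)(y-1)\}^{\tilde\beta_{\ell-1}(X_F)}\right].
\]
Setting $e^\lambda = (x-1)(y-1)$ and $t = 1 - 1/y$ inverts to $y = 1/(1-t)$ and $x = 1 + e^\lambda(1-t)/t$, so the right-hand expectation becomes exactly the Laplace transform $\phi(\lambda, t) = \E[e^{\lambda\tilde\beta_{\ell-1}(X(t))}]$ introduced in (\ref{eq:laplace1}), giving the closed-form representation (\ref{eq:laplace2}).

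The final step is to differentiate $\log\phi(\lambda,t)$ in $\lambda$ at $\lambda=0$. On the probabilistic side this returns $\E[\tilde\beta_{\ell-1}(X(t))]$. On the Tutte side, the prefactor $y^{-|X_\ell|}(y-1)^{\rho(X)}$ is $\lambda$-independent under our substitution and so contributes nothing to the logarithmic derivative; only $x$ depends on $\lambda$, with $\partial x/\partial\lambda|_{\lambda=0} = (1-t)/t$. The chain rule therefore collapses $\partial_\lambda \log T_\ell|_{\lambda=0}$ to $(1-t)/t \cdot \partial_x T_\ell(X;1/t,1/(1-t))/T_\ell(X;1/t,1/(1-t))$. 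Integrating over $t \in [0,1]$ produces the asserted identity.

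There is no real analytic obstacle: $\tilde\beta_{\ell-1}(X(t))$ takes only finitely many values so $\phi(\lambda,t)$ is smooth in $\lambda$, and the uniform bound $0 \le \tilde\beta_{\ell-1}(X(t)) \le |X_{\ell-1}|$ legitimizes Fubini. The only point requiring care is the bookkeeping of the substitution $(x,y)\leftrightarrow(\lambda,t)$ — specifically verifying that $y$ does not depend on $\lambda$ after the change of variables, so that the $y$-prefactor drops out of the logarithmic derivative and a single $\partial_x$ term with coefficient $(1-t)/t$ survives.
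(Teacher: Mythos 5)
Your proposal is correct and follows essentially the same route as the paper: rewriting the $\ell$-Tutte polynomial as a Bernoulli expectation via the Euler-characteristic identity, identifying it with the Laplace transform $\phi(\lambda,t)$ under the substitution $e^\lambda=(x-1)(y-1)$, $t=1-1/y$, taking the logarithmic derivative at $\lambda=0$, and integrating using (\ref{eq:lifetime}). The only addition is your explicit justification of the Fubini step, which the paper leaves implicit.
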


This can be regarded as a formula for minimum spanning
acycle taking Theorem~\ref{thm:lifetime2} into account.  
In this context, the above formula for $\ell=1$ is derived in \cite{steele}. 

\begin{ex}{\rm
Let $\Delta^2_n$ be the 2-skeleton of the maximal simplicial complex with $n$
 vertices. Let us write $L_1(n)$ for the lifetime sum of the 1st reduced persistent
 homology of the 2-Linial-Meshulam process on $\Delta^2_n$.
For $n=4,5$, the Tutte polynomials (\ref{eq:tutte}) and the expected lifetime sum (\ref{eq:expect_lifetime_sum}) are obtained as follows. 
\begin{align*}
&T_2(\Delta^2_4;x,y)=(x-1)^3+4(x-1)^2+6(x-1)+4+(y-1)=x^3+x^2+x+y\\
&\E[L_1(4)]=\int^1_0(1-t)^2(3+2t+t^2)dt=\frac{6}{5}\\
&T_2(\Delta^2_5;x,y)=6x+15x^2+15x^3+10x^4+4x^5+x^6+6y+20xy+15x^2y+5x^3y+11y^2\\
&\hspace{2cm}+10xy^2+6y^3+y^4\\
 &\E[L_1(5)]=\int^1_0(1-t)^3(1+t)(6+2t+4t^2-4t^3-t^4-8t^5+6t^6)dt=\frac{1817}{924}=1.96645\dots
\end{align*}
Similarly, we can compute $\E[L_1(6)] = \frac{5337295}{1939938}=2.75127\dots$. 
 }
\end{ex}
%
%
%
%
%
\section{The $\ell$-random-cluster model}
The $\ell$-Bernoulli cell complex is regarded as the product measure of those defined on $\ell$-cells with the same probability $p$. In the context of the Erd\"os-R\'enyi random graph, there is a variant known as the random-cluster model \cite{grimmett} which differs from the product measure by respecting the topology of the connectedness. For $p\in[0,1]$ and $q>0$, the random-cluster measure $\phi_{p,q}$ on $\Omega_1(X)$ is defined by 
\begin{align}\label{eq:rc1}
\phi_{p,q}(X_F)=\frac{1}{Z_{\rm RC}}p^{|F|}(1-p)^{|X_1\setminus F|}q^{\beta_0(X_F)},
\end{align}
where $Z_{{\rm RC}}$ is the normalizing constant (or partition function).
By definition, the Erd\"os-R\'enyi random graph corresponds to the model with $q=1$. 

When $q$ is an integer with $q\geq 2$, the random-cluster model is known to be related to the so-called Ising and Potts models arising in the statistical mechanics. 
In this model, we consider an assignment $s\in S=\{0,1,\dots,q-1\}^{X_0}$ of a value $s_x\in\{0,1,\dots,q-1\}$ to each vertex $x\in X_0$. 
For $s \in S$ and $e=|xy|\in X_1$, let us write
$\delta_e(s)=\delta_{s_x,s_y}$, where $\delta_{a,b}$ is the Kronecker
delta. Then, the probability law of the Potts model (the Ising model for $q=2$) is given by 
\begin{align}\label{eq:potts}
\pi_{\alpha,q}(s)=\frac{1}{Z_{\rm P}}e^{-\alpha H(s)},\quad s\in S,
\end{align}
where $Z_{\rm P}$ is the normalizing constant and the Hamiltonian is given by
\begin{align*}
H(s)=-\sum_{e\in X_1}\delta_e(s).
\end{align*}

Then, it is known in \cite{es} that the random-cluster model and
the Potts model can be coupled with
a coupling measure $\mu$ on $S\times \Omega_1(X)$ defined by 
\begin{align}\label{eq:es_couple}
\mu(s,X_F)=\frac{1}{Z_{\rm ES}} \prod_{e\in
 X_1}\{(1-p)\delta_{F(e),0}+p\delta_{F(e),1}\delta_e(s)\} 
\end{align}
so that the random-cluster model and Potts model are obtained as the marginals of $s$ and $X_F$, respectively.

In the rest of this  section, we modify the coupling
(\ref{eq:es_couple}) so that the higher dimensional generalizations of
the random-cluster model and the Potts model based on the
Bernoulli cell complex model are derived as the marginals.  
The key for the higher dimensional generalization is to regard $s\in S$
as a $0$-cochain in $C^0(X;\Z_q)$ and $\delta_e(s)$ as a local
obstruction of $s$ along the edge $e$. We also note that the coupling
(\ref{eq:es_couple}) can also be written as 
\begin{align*}
\mu(s,X_F)=\frac{1}{Z_{\rm ES}} (1-p)^{|X_1\setminus F|}p^{|F|}\prod_{e\in F}\delta_e(s).
\end{align*}
Then, $\prod_{e\in F}\delta_e(s)=1$ if and only if $\partial^0s|_{X_F}=0$, i.e., $s\in Z^0(X_F;\Z_q)$. Here $\partial^0$ and $Z^0(X_F;\Z_q)$ express the $0$-coboundary map of the cochain complex $C_*(X;\Z_q)$ and the cocycle group of $X_F$. 

From this observation, we now generalize the coupling so that the marginals naturally define a random-cluster model and a Potts model on a cell complex $X$. 
In the following derivation, we assume $\Z_q=\{0,1,\dots,q-1\}$ to be a finite field with the order $q$ and use it for the coefficient ring of cohomology. 
Let 
\begin{align*}
\xymatrix{
\cdots\ar[r]&C^{\ell-1}(X;\Z_q)\ar[r]^{\partial^{\ell-1}}&C^\ell(X;\Z_q)\ar[r]^{\quad\partial^\ell}&\cdots
}
\end{align*}
be the cellular cochain complex and $H^k(X;\Z_q)$ be its cohomology.  We note that $H^k(X;\Z_q)\simeq \Hom_{\Z_q}(H_k(X;\Z_q),\Z_q)$, and especially, $\dim H_k(X;\Z_q)=\dim H^k(X;\Z_q)$. We denote this dimension by $\beta_k(X;\Z_q)$ and call it the $k$-th Betti number with $\Z_q$ coefficient.

We define a coupling $\mu$ on $C^{\ell-1}(X;\Z_q)\times\Omega_\ell(X)$ by
\begin{align}\label{eq:lm_coupling}
	\mu(s,X_F)\propto\prod_{\sigma\in X_\ell}
	\left\{
		(1-p)\delta_{F(\sigma),0}+p\delta_{F(\sigma),1}\delta_{\sigma}(s)\right\},
\end{align}
where
\[
\delta_\sigma(s)=\left\{\begin{array}{ll}
1,&\quad\partial^{\ell-1} s(\sigma)=0\\
0,&\quad{\rm otherwise}
\end{array}\right..
\]
We note that, when $\ell=1$, this is the same as the original definition of $\delta_\sigma$.

The first marginal becomes
\begin{align*}
\sum_{X_F\in\Omega_\ell(X)}\mu(s,X_F)&\propto\sum_{X_F\in\Omega_\ell(X)}\prod_{\sigma\in X_\ell}
	\left\{(1-p)\delta_{F(\sigma),0}+p\delta_{F(\sigma),1}\delta_{\sigma}(s)\right\}\\
	&=\prod_{\sigma\in X_\ell}\{(1-p)+p\delta_\sigma(s)\}.
\end{align*}
By setting $p=1-e^{-\alpha}$, we have
\begin{align*}
\sum_{X_F\in\Omega_\ell(X)}\mu(s,X_F)&\propto\prod_{\sigma\in X_\ell}\{e^{-\alpha}+(1-e^{-\alpha})\delta_\sigma(s)\}\\
&=e^{-\alpha|X_\ell|}\prod_{\sigma\in X_\ell}\{1+(e^\alpha-1)\delta_\sigma(s)\}\\
&=e^{-\alpha|X_\ell|}e^{-\alpha H(s)},
\end{align*}
where $H(s)=-\sum_{\sigma\in X_\ell}\delta_\sigma(s)$. 
For $\ell=1$, this recovers the $q$-Potts model (\ref{eq:potts}).

The second marginal becomes
\begin{align}
\sum_{s\in C^{\ell-1}(X;\Z_q)}\mu(s,X_F)&\propto\sum_{s\in C^{\ell-1}(X;\Z_q)}\prod_{\sigma\in X_\ell}
	\left\{
		(1-p)\delta_{F(\sigma),0}+p\delta_{F(\sigma),1}\delta_{\sigma}(s)\right\}\\
&=(1-p)^{|X_\ell\setminus F|}p^{|F|}\sum_{s\in C^{\ell-1}(X;\Z_q)}\prod_{\sigma\in F}\delta_\sigma(s).\nonumber
\end{align}
Note that $\prod_{\sigma\in F}\delta_\sigma(s)=1$ if and only if $s\in Z^{\ell-1}(X_F;\Z_q)$. Hence, we have
\begin{align}\label{eq:rc_pre}
\sum_{s\in C^{\ell-1}(X)}\mu(s,X_F)\propto(1-p)^{|X_\ell\setminus F|}p^{|F|}q^{\dim Z^{\ell-1}(X_F;\Z_q)}.
\end{align}
For $\ell=1$, since $\beta_0(X_F)=\beta_0(X_F;\Z_q)=\dim Z^0(X_F;\Z_q)$, this recovers the random-cluster model (\ref{eq:rc1}).

%
%
%
%
%
The second marginal (\ref{eq:rc_pre}) is defined on the space $\Omega_\ell(X)$ in which each element $X_F$ has the same $(\ell-1)$-skeleton $X^{\ell-1}$. This leads to $B^{\ell-1}(X_F;\Z_q)=B^{\ell-1}(X;\Z_q)$, and in particular, $\dim B^{\ell-1}(X_F;\Z_q)$ is independent of the choice of $F$. Therefore, by appropriately changing the normalizing constant, the second marginal has the following formulation
\begin{align*}
\mu_{p,q}(Y)=\frac{1}{Z_{p,q}}p^{|Y_\ell|}(1-p)^{|X_\ell\setminus Y_\ell|}q^{\beta_{\ell-1}(Y;\Z_q)},\quad\quad Y\in\Omega_\ell(X).
\end{align*}

In the derivation above, we assume that $q$ is a prime number, and $\beta_{\ell-1}(Y;\Z_q)$ is dependent on the choice of $q$ for $\ell>1$. 
In what follows, we study a slightly generalized probability measure
\begin{align*}
\mu_{p,q}(Y)=\frac{1}{Z_{p,q}}p^{|Y_\ell|}(1-p)^{|X_\ell\setminus Y_\ell|}q^{\beta_{\ell-1}(Y)},\quad\quad Y\in\Omega_\ell(X)
\end{align*}
for $p\in [0,1]$ and $q>0$. Here, we also allow to take the coefficient of the Betti number $\beta_{\ell-1}(Y)=\beta_{\ell-1}(Y;K)$ in some fixed field $K$, which is not necessary to be $\Z_q$. We call this probability measure the {\em $\ell$-random-cluster measure} on a cell complex $X$ with $K$ coefficient. 

In this section, we show two basic properties on $\mu_{p,q}$. Both of
them are independent of the choice of $K$.
\begin{thm}\label{thm:positive_association}
Let $X$ be a cell complex. For $p\in(0,1)$ and $q\geq 1$, the $\ell$-random-cluster measure $\mu_{p,q}$ on $X$ is positively associated, i.e., 
 \begin{align*}
\mu_{p,q}(fg)\geq\mu_{p,q}(f)\mu_{p,q}(g)
 \end{align*}
for any increasing functions $f,g:\Omega_\ell(X)\rightarrow \R$.
\end{thm}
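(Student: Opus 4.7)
The standard route to positive association is the FKG inequality, which for a strictly positive measure on the Boolean lattice $\Omega_\ell(X)\cong\{0,1\}^{X_\ell}$ reduces to verifying the (log-)supermodular ``lattice condition''
\begin{equation*}
\mu_{p,q}(Y\cup Z)\,\mu_{p,q}(Y\cap Z)\;\ge\;\mu_{p,q}(Y)\,\mu_{p,q}(Z),\qquad Y,Z\in\Omega_\ell(X),
\end{equation*}
where $Y\cup Z$ and $Y\cap Z$ are taken with respect to the coordinatewise join and meet on $\{0,1\}^{X_\ell}$ (equivalently, the union and intersection of the corresponding subcomplexes having the common $(\ell-1)$-skeleton $X^{\ell-1}$). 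I would first invoke a standard FKG/Holley-type statement to reduce positive association to this inequality; for $p\in(0,1)$ the measure is strictly positive, so this reduction is clean.

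Next, the factor $p^{|Y_\ell|}(1-p)^{|X_\ell\setminus Y_\ell|}$ is log-modular in $|Y_\ell|$ since $|Y\cup Z|+|Y\cap Z|=|Y|+|Z|$, so it contributes exactly the same to both sides. Thus the lattice condition reduces, using $q\ge 1$, to the supermodularity of $\beta_{\ell-1}$:
\begin{equation*}
\beta_{\ell-1}(Y\cup Z)+\beta_{\ell-1}(Y\cap Z)\;\ge\;\beta_{\ell-1}(Y)+\beta_{\ell-1}(Z).
\end{equation*}
This is the heart of the matter, and I expect it to be the main technical step.

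To prove the supermodularity of $\beta_{\ell-1}$, I would apply the Mayer--Vietoris long exact sequence to the pair of subcomplexes $(Y,Z)$ of $X$ (both containing $X^{\ell-1}$):
\begin{equation*}
\cdots\to H_\ell(Y\cup Z)\to H_{\ell-1}(Y\cap Z)\xrightarrow{\varphi} H_{\ell-1}(Y)\oplus H_{\ell-1}(Z)\xrightarrow{\psi} H_{\ell-1}(Y\cup Z)\to\cdots
\end{equation*}
Using exactness at the middle two terms, $\dim\ker\psi=\rank\varphi\le\beta_{\ell-1}(Y\cap Z)$, so
\begin{equation*}
\rank\psi=\beta_{\ell-1}(Y)+\beta_{\ell-1}(Z)-\dim\ker\psi\;\ge\;\beta_{\ell-1}(Y)+\beta_{\ell-1}(Z)-\beta_{\ell-1}(Y\cap Z),
\end{equation*}
and since $\rank\psi\le\beta_{\ell-1}(Y\cup Z)$ the claimed inequality follows immediately.

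Finally, combining the three observations above yields the lattice condition, and the FKG inequality then delivers $\mu_{p,q}(fg)\ge\mu_{p,q}(f)\mu_{p,q}(g)$ for all increasing $f,g$. The main obstacle, as noted, is the Mayer--Vietoris step: one must confirm that Mayer--Vietoris applies in the cellular category to subcomplexes sharing a skeleton (which it does, since any two subcomplexes of a CW complex form a valid excisive triad), and that the supermodularity holds over the coefficient field $K$ used to define $\beta_{\ell-1}$, which is exactly what the rank inequality above provides.
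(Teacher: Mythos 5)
Your proposal is correct and follows essentially the same route as the paper: reduce positive association to the FKG lattice condition (valid since $\mu_{p,q}$ is strictly positive for $p\in(0,1)$), note that the $p$-dependent factor is log-modular so that for $q\ge 1$ everything hinges on the supermodularity $\beta_{\ell-1}(Y\cup Z)+\beta_{\ell-1}(Y\cap Z)\ge\beta_{\ell-1}(Y)+\beta_{\ell-1}(Z)$, and derive that from exactness of the Mayer--Vietoris sequence. The paper's Lemma~\ref{lem:mayer_vietoris} does the same rank bookkeeping on a slightly different segment of the sequence, but the argument is the same.
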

For $\ell=1$, the positive association plays a key role to study phase transitions of the 1-random-cluster model of infinite graphs.  
We generalized the random-cluster model for higher dimension so that
topological nature of this model became clearer. 
Here we give a proof of the theorem 
by emphasizing with a topological viewpoint. 

To prove the theorem, we need the following lemma.
\begin{lem} \label{lem:mayer_vietoris}
For topological spaces $A$ and $B$,
\begin{align*}
\beta_k(A\cap B) + \beta_k(A\cup B)\geq \beta_k(A) +\beta_k(B).
\end{align*}
\end{lem}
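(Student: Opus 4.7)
The plan is to invoke the Mayer--Vietoris exact sequence and then apply rank-nullity to the middle homomorphism. In the intended application $A$ and $B$ will be subcomplexes of a cell complex, so the classical Mayer--Vietoris sequence applies; more generally one needs $(A,B)$ to be an excisive pair, which I would note at the outset.

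Explicitly, I would write the portion of the long exact Mayer--Vietoris sequence
\begin{equation*}
\cdots \longrightarrow H_k(A\cap B) \stackrel{\psi}{\longrightarrow} H_k(A)\oplus H_k(B) \stackrel{\phi}{\longrightarrow} H_k(A\cup B) \longrightarrow \cdots
\end{equation*}
Exactness at $H_k(A)\oplus H_k(B)$ gives $\kernel\phi = \image\psi$, so
\[
\dim\kernel\phi \le \rank H_k(A\cap B) = \beta_k(A\cap B).
\]
Exactness at $H_k(A\cup B)$ gives $\image\phi \subset H_k(A\cup B)$, so
\[
\dim\image\phi \le \beta_k(A\cup B).
\]

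Applying the rank-nullity relation to $\phi$ yields
\[
\beta_k(A)+\beta_k(B) = \dim\kernel\phi + \dim\image\phi \le \beta_k(A\cap B) + \beta_k(A\cup B),
\]
which is the desired inequality.

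There is no real obstacle beyond choosing the right hypothesis under which Mayer--Vietoris is available; since the only use of the lemma in the sequel will be for subcomplexes of a cell complex (appearing in the positive association argument for $\mu_{p,q}$), the cellular Mayer--Vietoris sequence is directly applicable and the rest is a one-line diagram chase. I would state the lemma under a mild excisiveness hypothesis (or simply restrict to CW pairs) to keep the statement honest.
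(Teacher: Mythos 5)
Your proof is correct and follows essentially the same route as the paper's: the Mayer--Vietoris sequence combined with rank-nullity (the paper applies rank-nullity to all three terms and identifies the difference as $\dim\kernel i+\rank\delta\geq 0$, whereas you apply it only to the middle map and bound kernel and image separately, but this is the same argument packaged differently). Your side remark about needing an excisive pair (or restricting to subcomplexes, which is the only case used later) is a reasonable precision that the paper glosses over.
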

\begin{proof}
Let us consider the Mayer-Vietoris sequence:
\begin{align*}
\xymatrix{
\cdots\ar[r]&H_k(A\cap B)\ar[r]^{i\quad}&H_k(A)\oplus H_k(B)\ar[r]^{\quad j}&H_k(A\cup B)\ar[r]^{\delta\quad}&H_{k-1}(A\cap B)\ar[r]&\cdots
}
\end{align*}
This exact sequence leads to the following relations:
\begin{align*}
&\beta_k(A\cap B) = \rank i + \dim\kernel i,\\
&\beta_k(A\cup B) = \rank\delta+\dim\kernel\delta,\\
&\beta_k(A) + \beta_k(B)=\rank j+\dim\kernel j.
\end{align*}
Then, it follows from $\image i=\kernel j$, and $\image j=\kernel\delta$ that
\begin{align*}
\beta_k(A\cap B)+\beta_k(A\cup B)-\beta_k(A)-\beta_k(B)=\dim \kernel i+\rank\delta\geq 0.
\end{align*}
\end{proof}

\begin{proof}[Proof of Theorem \ref{thm:positive_association}]
Since $\mu_{p,q}$ is strictly positive for $p\in(0,1)$, it is sufficient to prove that the measure $\mu_{p,q}$ has the so-called FKG lattice property \cite{grimmett}. Here, the FKG lattice property is expressed as
\begin{align*}
\mu_{p,q}(Y\cup Y')\mu_{p,q}(Y\cap Y')\geq \mu_{p,q}(Y)\mu_{p,q}(Y')
\end{align*}
for $Y,Y'\in\Omega_\ell(X)$. If $q \ge 1$, this is equivalent to show 
\begin{align*}
\beta_{\ell-1}(Y\cup Y')+\beta_{\ell-1}(Y\cap Y')\geq \beta_{\ell-1}(Y)+\beta_{\ell-1}(Y'),
\end{align*}
which is proved from Lemma \ref{lem:mayer_vietoris}.
\end{proof}

Next, we show a relation between the normalizing constant and the $\ell$-Tutte polynomial. Here, for the consistency to the known result for $\ell=1$, we use the non-reduced $\ell$-Tutte polynomial 
\begin{align}\label{eq:non_reduced_tutte}
T_\ell(X;x,y)=\sum_{F\subset X_\ell}(x-1)^{\beta_{\ell-1}(X_F)}(y-1)^{\beta_\ell(X_F)}.
\end{align}
%
 \begin{thm}
Let $X$ be a cell complex. Then, the normalizing constant $Z_{p,q}$ of the $\ell$-random-cluster model on $X$ is expressed as 
\begin{align*}
 Z_{p,q}
 &= \left(\frac{p}{1-p}\right)^{\beta_{\ell-1}(X)} p^{r(X)} (1-p)^{\beta_{\ell}(X^{\ell})}
T_\ell\left(X;1+\frac{q(1-p)}{p},\frac{1}{1-p}\right), 
\end{align*}
where $T(X;x,y)$ is the $\ell$-Tutte polynomial {\rm (\ref{eq:non_reduced_tutte})} and $r(X)=\rank \partial_\ell$.
\end{thm}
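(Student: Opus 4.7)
The plan is to substitute the specified values $x = 1 + q(1-p)/p$ and $y = 1/(1-p)$ directly into the non-reduced $\ell$-Tutte polynomial (\ref{eq:non_reduced_tutte}), and to decouple the dependence on $F$ from the topological invariants of $X$ by means of a linear relation between $\beta_\ell(X_F)$ and $\beta_{\ell-1}(X_F)$. This mirrors the derivation that led to (\ref{eq:expect_lifetime_sum}), with the main new ingredient being the bookkeeping of the non-reduced Betti numbers.

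First I would establish the non-reduced analogue of (\ref{eq:rho}). Since $X^{\ell-1}$ is the common $(\ell-1)$-skeleton of all $X_F$, we have $\beta_k(X_F) = \beta_k(X^{\ell-1})$ for $0\leq k \leq \ell-2$. Applying the Euler--Poincar\'e formula to both $X_F$ and $X^{\ell-1}$ and subtracting yields
\[
\beta_\ell(X_F) \;=\; \beta_{\ell-1}(X_F) \,+\, |F| \,-\, \beta_{\ell-1}(X^{\ell-1})
\]
for every $F \subset X_\ell$. Substituting this into (\ref{eq:non_reduced_tutte}) and collecting terms gives
\[
T_\ell(X;x,y) \;=\; (y-1)^{-\beta_{\ell-1}(X^{\ell-1})} \sum_{F \subset X_\ell} \bigl[(x-1)(y-1)\bigr]^{\beta_{\ell-1}(X_F)}\, (y-1)^{|F|}.
\]

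Next, with the substitution $x = 1+q(1-p)/p$, $y = 1/(1-p)$, we have $(x-1)(y-1)=q$ and $y-1 = p/(1-p)$. Therefore
\[
T_\ell\!\left(X; 1+\tfrac{q(1-p)}{p}, \tfrac{1}{1-p}\right) \;=\; \Bigl(\tfrac{1-p}{p}\Bigr)^{\beta_{\ell-1}(X^{\ell-1})} \sum_{F\subset X_\ell} q^{\beta_{\ell-1}(X_F)} \Bigl(\tfrac{p}{1-p}\Bigr)^{|F|},
\]
and the inner sum equals $Z_{p,q}/(1-p)^{|X_\ell|}$ by direct comparison with the definition of the $\ell$-random-cluster measure. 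Solving for $Z_{p,q}$ gives
\[
Z_{p,q} \;=\; \Bigl(\tfrac{p}{1-p}\Bigr)^{\beta_{\ell-1}(X^{\ell-1})} (1-p)^{|X_\ell|}\, T_\ell\!\left(X; 1+\tfrac{q(1-p)}{p}, \tfrac{1}{1-p}\right).
\]

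The only remaining task is to rewrite the prefactor in the form stated. Applying rank--nullity to $\partial_\ell:C_\ell(X)\to C_{\ell-1}(X)$ gives $|X_\ell| = r(X) + \beta_\ell(X^\ell)$ since $\beta_\ell(X^\ell) = \dim\ker\partial_\ell$. Attaching cells of dimension $\geq \ell+1$ does not change $\beta_{\ell-1}$, so $\beta_{\ell-1}(X) = \beta_{\ell-1}(X^\ell)$; combined with the identity $\beta_{\ell-1}(X^{\ell-1}) = r(X) + \beta_{\ell-1}(X^\ell)$ (again rank--nullity, comparing $X^{\ell-1}$ and $X^\ell$), we obtain $\beta_{\ell-1}(X^{\ell-1}) = r(X) + \beta_{\ell-1}(X)$. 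Factoring
\[
\Bigl(\tfrac{p}{1-p}\Bigr)^{r(X)+\beta_{\ell-1}(X)}(1-p)^{r(X)+\beta_\ell(X^\ell)} \;=\; \Bigl(\tfrac{p}{1-p}\Bigr)^{\beta_{\ell-1}(X)} p^{r(X)}(1-p)^{\beta_\ell(X^\ell)}
\]
produces the claimed formula. There is no essential obstacle here; the only thing that needs care is correctly identifying the exponent of $(p/(1-p))$ produced by the substitution with the combination $r(X)+\beta_{\ell-1}(X)$, which is precisely what rank--nullity delivers.
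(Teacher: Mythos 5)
Your proof is correct and follows essentially the same route as the paper: both rest on the linear relation $|F| = \beta_\ell(X_F) - \beta_{\ell-1}(X_F) + \dim\ker\partial_{\ell-1}$ (you derive it via Euler--Poincar\'e, the paper via rank--nullity, and you run the substitution from the Tutte polynomial toward $Z_{p,q}$ rather than the reverse), followed by the same identification $\dim\ker\partial_{\ell-1} = r(X) + \beta_{\ell-1}(X)$ to put the prefactor in the stated form. Your bookkeeping of the exponents checks out.
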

\begin{proof}
 Since $\beta_{\ell}(Y) = |Y_{\ell}| - r(Y)$ and
 $\beta_{\ell-1}(Y) = \dim \ker \partial_{\ell-1} - r(Y)$,
 we have 
\[
 |Y_{\ell}| = \beta_{\ell}(Y) - \beta_{\ell-1}(Y)+ \dim \ker \partial_{\ell-1}
\]
 for every $Y \in \Omega_{\ell}$. 
 Therefore,
 \begin{align*}
\lefteqn{\sum_{Y\in \Omega_\ell} p^{|Y_{\ell}|} (1-p)^{|X_{\ell} \setminus
 Y_{\ell}|} q^{\beta_{\ell-1}(Y)}}\\ 
 &= p^{\dim \ker \partial_{\ell-1}} (1-p)^{|X_{\ell}| - \dim \ker
  \partial_{\ell-1}}  
  \sum_{Y\in \Omega_\ell}
  \left(\frac{q(1-p)}{p}\right)^{\beta_{\ell-1}(Y)}
    \left(\frac{p}{1-p}\right)^{\beta_{\ell}(Y)} \\
 &= \left(\frac{p}{1-p}\right)^{\beta_{\ell-1}(X)} p^{r(X)}
  (1-p)^{\beta_{\ell}(X^{\ell})} 
T_\ell\left(X;1+\frac{q(1-p)}{p},\frac{1}{1-p}\right),  
 \end{align*}
\end{proof}

As is the case of the $1$-random-cluster model on graphs, 
the situation is more subtle for $q < 1$. Here we just remark that
 as $p \to 0$ and $q/p \to 0$
 \[
  \mu_{p,q}(Y) \to \frac{1}{|\cS^\ell|} \mathbf{1}(Y \in \cS^\ell), 
 \]
that is, the $\ell$-random-cluster measure converges to the uniform
$\ell$-spanning acycle measure.
%

\section{Discussion}
In this paper, we studied the $\ell$-Bernoulli cell complexes from the
perspective of persistent homology, Tutte polynomials, and the random-cluster
model.  
From the positive association property on the $\ell$-random-cluster
model,
it would become an interesting research area to study infinite cell
complex models 
(e.g., the $\ell$-cubical lattice), thermodynamic limits, and phase transitions.

\section*{Acknowledgement}
This work is partially supported by JSPS Grant-in-Aid (26610025, 26287019) and JST CREST Mathematics (15656429).

\bibliographystyle{amsplain}

\begin{thebibliography}{99}


\bibitem{bbc}
C. Bajo, B. Burdick, S. Chmutov. On the Tutte-Krushkal-Renardy polynomial for cell complexes. arXiv:1204.3563.

\bibitem{chad}
C. Giusti, E. Pastalkova, C. Curto, and V. Itskov. {\it Proceedings of the National Academy of Sciences} 112 (2015), 13455--13460.

%

\bibitem{elz}
	H. Edelsbrunner, D. Letscher, and A. Zomorodian. 
	Topological Persistence and Simplification. 
	{\it Discrete Comput. Geom.} 28 (2002), 511--533.
	
\bibitem{es}
R. G. Edwards and A. D. Sokal. Generalization of the Fortuin-Kasteleyn-Swendsen-Wang representation and Monte Carlo algorithm. The Physical Review D 38 (1988), 2009--2012. 

\bibitem{er}
	P. Erd{\"o}s and A. R{\'e}nyi. 
	On random graphs I. 
	{\it Publ. Math. Debrecen} 6 (1959), 290--297.

\bibitem{frieze}
	A. M. Frieze. 
	On the value of a random minimum spanning tree problem. 
	{\it Discrete Applied Math.} 10 (1985), 47--56.
	
\bibitem{grimmett}
G. Grimmett. The random-cluster model. Springer, 2006.

\bibitem{hatcher}
A. Hatcher. Algebraic Topology. Cambridge University Press, 2001.  

\bibitem{hs}
Y. Hiraoka and T. Shirai. Minimum spanning acycle and lifetime of persistent homology in the Linial-Meshulam process. arXiv:1503.05669.

\bibitem{kmm}
T. Kaczynski, K. Mischaikow, and M. Mrozek. Computational Homology. Springer, 2004.

\bibitem{kahle}
	M. Kahle. 
	Topology of random clique complexes. 
	{\it Discrete Math.} 309 (2009), 1658--1671.
	
\bibitem{kr}
V. Krushkal and D. Renardy. A Polynomial invariant and duality for triangulations. arXiv:1012.1310v4.

\bibitem{lm}
	N.~Linial and R.~Meshulam. 
	Homological connectivity of random $2$-complexes. 
	{\it Combinatorica} 26 (2006), 475--487. 
	
\bibitem{steele}
	J.~M~Steele. Minimal spanning trees for graphs with random edge
	lengths. \\
	\tt{http://stat.wharton.upenn.edu/~steele/Publications/PDF/MSTfGwREL.pdf} 

\bibitem{zc}
	A. Zomorodian and G. Carlsson. 
	Computing persistent homology. 
	{\it Discrete Comput. Geom.} 33 (2005), 249-274.
	
	
\end{thebibliography}

\appendix
 \section{Cellular homology and cohomology}\label{sec:cell_homology}
Let $X=\{\sigma^\ell_i : i=1,\dots,n_\ell, \ell=1,\dots,d\}$ be a $d$-dimensional cell complex. The cellular chain complex of $X$ is defined by the horizontal sequence of the diagram 
\begin{align}\label{eq:cell_chain_hom}
\xymatrix{
&&H_{\ell-1}(X^{\ell-1})\ar[dr]^-{j_{\ell-1}}&&\\
\cdots\ar[r]& H_\ell(X^\ell,X^{\ell-1})\ar[rr]^-{\partial_\ell=j_{\ell-1}\circ d_\ell}\ar[ur]^-{d_\ell} && H_{\ell-1}(X^{\ell-1},X^{\ell-2})\ar[r]&\cdots
}
\end{align}
where the homology is in the sense of singular homology with a field $K$ coefficient (arbitrary characteristics),  $d_\ell$ is the connecting morphism and $j_{\ell-1}$ is induced by the quotient chain map of the singular chain groups. Since $d_{\ell-1}\circ j_{\ell-1}$ is the composition of consecutive maps in the exact sequence of the pair $(X^{\ell-1},X^{\ell-2})$, we have $\partial_{\ell-1}\circ\partial_\ell=0$, showing that the horizontal sequence becomes a chain complex. Then, the cellular homology is defined by $H^{\rm cell}_\ell(X)=\kernel \partial_\ell/\image\partial_{\ell+1}$. 

Because of the isomorphism $H_\ell(X)\simeq H^{\rm cell}_\ell(X)$, we use the same symbol $H_\ell(X)$ even for the cellular homology. 
We also denote the cellular chain complex defined by the horizontal
sequence in (\ref{eq:cell_chain_hom}) by 
\begin{align}\label{eq:cell_chain}
\xymatrix{
\cdots\ar[r]^-{\partial_{\ell+1}} & C_\ell(X)\ar[r]^-{\partial_\ell} & C_{\ell-1}(X)\ar[r]^-{\partial_{\ell-1}} & \cdots
}
\end{align}
as usual.
The reduced homology $\tilde H_\ell(X)$ is defined by $H_0(X)\simeq K\oplus \tilde H_0(X)$ and $H_\ell(X)=\tilde H_\ell(X)$ for $\ell>0$. The betti number (or the reduced betti number, resp.) is given by $\beta(X)=\rank H_\ell(X)$ (or $\tilde\beta(X)=\rank \tilde H_\ell(X)$, resp.).

We recall that $H_\ell(X^\ell,X^{\ell-1})$ is generated by the set $X_\ell$ of $\ell$-cells, i.e.,
\begin{align*}
H_\ell(X^\ell,X^{\ell-1}) \simeq\Span_KX_\ell.
\end{align*}
Thus, we obtain a matrix representation $(M_{i,j})_{1\leq i\leq n_{\ell-1},1\leq j\leq n_{\ell}}$ of $\partial_\ell$ using the bases $X_\ell$ and $X_{\ell-1}$. Here, $M_{i,j}$ is given by the degree of the map 
\begin{align*}
S^{\ell-1}_{\sigma^{\ell}_j}\rightarrow X^{\ell-1}\rightarrow S^{\ell-1}_{\sigma^{\ell-1}_i},
\end{align*}
where the first map is the attaching map of the $\ell$-cell $\sigma^{\ell}_j$ and the latter map is the quotient map collapsing $X^{\ell-1}\setminus \sigma^{\ell-1}_i$ to a point.

Let us consider examples of cell complexes
$X=\{e^0,e^1\}$ and $Y=\{e^0_1,e^0_2,e^1_1,e^1_2\}$ shown in Figure \ref{fig:cell_example}.
Then, the chain complexes of $X$ and $Y$ are given by
\begin{align*}
\xymatrix{
0\ar[r]&K\ar[rr]^{0}&&K\ar[r]&0,\\
0\ar[r]&K^2\ar[rr]^{
\scriptsize{
\left(\begin{array}{cc}
1 & 1\\
-1 & -1
\end{array}\right)
}
}&&K^2\ar[r]&0,
}
\end{align*}
respectively. Hence, we have 
\begin{align*}
H_\ell(X)\simeq H_\ell(Y)\simeq
\left\{
\begin{array}{ll}
K,& \ell=0,1\\
0,& \ell\neq 0,1
\end{array}
\right..
\end{align*}

\begin{figure}[htbp]
 \begin{center}
  \includegraphics[width=0.5\hsize]{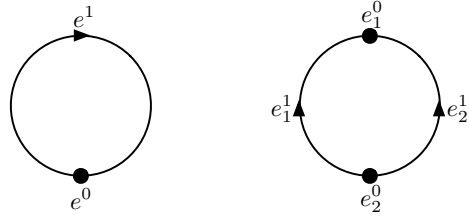}
  \caption{1-dimensional cell complexes $X$ (left) and $Y$ (right). }
  \label{fig:cell_example}
 \end{center}
\end{figure}

By taking the dual $(\bullet)^*=\Hom_K(\bullet,K)$ of (\ref{eq:cell_chain}), we obtain a cochain complex
\begin{align}\label{eq:cell_cochain}
\xymatrix{
\cdots&\ar[l]_-{\partial^{\ell}}  C^\ell(X) & \ar[l]_-{\partial^{\ell-1}}C^{\ell-1}(X)& \ar[l]_-{\partial^{\ell-2}} \cdots,
}
\end{align}
where $C^\ell(X)=C_\ell(X)^*$ and $\partial^{\ell-1}=\partial_\ell^*$. Then, the cellular cohomology is defined by 
$H^\ell(X)=\kernel\partial^\ell/\image\partial^{\ell-1}$. It is known that $H^\ell(X)\simeq \Hom_K(H_\ell(X),K)$.

\end{document}